\newcommand{\vfi}{\varphi}
\newtheorem{theorem}{\sc Theorem}[section]
\newtheorem{lem}[theorem]{\sc Lemma}
\newtheorem{cor}[theorem]{\sc Corollary}
\newtheorem{rem}[theorem]{\sc Remark}
\newtheorem*{thmA}{Theorem A}
\newtheorem*{thmB}{Theorem B}
\newtheorem*{corC}{Corollary C}
\title[Non-abelian tensor square]{Non-abelian 
tensor square of finite-by-nilpotent groups}
\author{R. Bastos }
\address{ Departamento de Matemat\'atica, Universidade de Bras\'ilia,
Brasilia-DF, 70910-900 Brazil }
\email{bastos@mat.unb.br}
\author{N.\,R. Rocco }
\address{ Departamento de Matemat\'atica, Universidade de Bras\'ilia,
Brasilia-DF, 70910-900 Brazil }
\email{norai@unb.br}
\subjclass[2010]{20E34; 20F14; 20F24}
\keywords{Structure theorems; derived series; FC-groups}
\begin{document}

\maketitle

\begin{abstract}
Let $G$ be a group. We denote by $\nu(G)$ an extension of the non-abelian tensor square $G \otimes G$ by $G \times G$. We prove that if $G$ is finite-by-nilpotent, then the non-abelian tensor square $G \otimes G$ is finite-by-nilpotent. Moreover, $\nu(G)$ is nilpotent-by-finite (Theorem~A). Also we characterize BFC-groups in terms of $\nu(G)$ (Theorem~B).
\end{abstract}

\maketitle

\section{Introduction}

Let $G$ and $G^{\varphi}$ be groups, isomorphic via 
$\varphi: g \mapsto g^{\varphi}$ for all $g \in G$. 
Consider the group $\nu(G),$ introduced in \cite{NR1} as 
\begin{equation} \label{eq:presenta0}
 \nu(G) = \left \langle G \cup G^{\varphi} \, | \, [g, h^{\varphi}]^k = 
 [g^k, (h^k)^{\varphi}] = [g, h^{\varphi}]^{k^{\varphi}}, \, \forall g,h,k 
 \in G \right \rangle.   
\end{equation}
It is a well known fact (see \cite{NR1}) that the subgroup 
$\Upsilon(G) = 
[G, G^{\varphi}]$ of $\nu(G)$ is canonically isomorphic with the non-abelian 
tensor square $G \otimes G$, as defined by R. Brown and J.-L. Loday in their seminal 
paper \cite{BL}, the isomorphism being induced by $g \otimes h \mapsto 
[g, h^{\varphi}].$ The normality of $\Upsilon(G)$ in $\nu(G)$ gives the decomposition 
\begin{equation} \label{eq:decomposition}
 \nu(G) = \left ( [G, G^{\varphi}] \cdot G \right ) \cdot G^{\varphi},
\end{equation}
where the dots mean (internal) semidirect products. With this in mind we shall 
write $\nu(G) = \Upsilon(G) \, G \, G^{\varphi}$ and use 
$\Upsilon(G), \; [G, G^{\varphi}]$ or $G \otimes G$ indistinctly to denote 
the non-abelian tensor square of $G.$ 

The group $\nu(G)$ inherits many properties of the argument $G$; for instance, 
if $G$ is a finite $\pi-$group ($\pi$ a set o primes), nilpotent, solvable, 
 polycyclic-by-finite, or a locally finite group, then so is $\nu(G)$ (and 
hence also $G \otimes G$ and $G \wedge G$) \cite{NR1,BJR,LN,BM,M}. For a deeper discussion of commutator approach we refer the reader to \cite{K,NR}.

In the present article we want to study the structure of $G \otimes G$ and $\nu(G)$ when $G$ is a finite-by-nilpotent group. 

An important result in the context of finite-by-nilpotent groups, due to R. Baer \cite[14.5.1]{Rob}, states that if $Z_k(G)$ is a subgroup of finite index in $G$, for some positive integer $k$, then the subgroup $\gamma_{k+1}(G)$ is finite. The converse does not hold in general; however in \cite{Hall}, Ph. Hall obtained that if $\gamma_{k+1}(G)$ is finite, for some positive integer $k$, then $Z_{2k}(G)$ is a subgroup of finite index in $G$. This theorem shows that if $G$ is finite-by-nilpotent, then $G$ is nilpotent-by-finite. In the present paper we establish the following related result.   

\begin{thmA} \label{a}
Let $G$ be a finite-by-nilpotent group. Then  
\begin{itemize}
 \item[$(a)$] The non-abelian tensor square $G \otimes G$ is finite-by-nilpotent;  
 \item[$(b)$] The group $\nu(G)$ is nilpotent-by-finite; 
\end{itemize}
\end{thmA}

The converse of Theorem~A $(a)$ does not hold. In \cite[Theorem 22]{BM} Blyth and Morse proved that $\Upsilon(D_{\infty})$ is abelian, where $D_{\infty} = \langle a,b\mid  a^2=1, a^b = a^{-1}\rangle$. More precisely, $\Upsilon(D_{\infty})$ is isomorphic to $C_2 \times C_2 \times C_2 \times C_{\infty}$.  On the other hand, $\gamma_k(D_{\infty})$ is an infinite cyclic group, for every positive integer $k$. So, in particular, $D_{\infty}$ cannot be finite-by-nilpotent.  

It is an immediate consequence of~\cite[Proposition 9]{BJR} that if $G$ has a central subgroup $Z$ of finite index then also $G \otimes G$ has a central subgroup of finite index. Such a group is called \textit{central-by-finite}. In particular, in this case the center $Z(G)$ is a subgroup of finite index in $G$. I. Schur \cite[10.1.4]{Rob} showed that if $G$ is a central-by-finite group then the derived subgroup $G'$ is finite and $exp(G')$ divides $\vert G/Z(G) \vert$. It is clear that every central-by-finite group is also a BFC-group. Recall that a group $G$ is called a BFC-group if there is a positive integer $d$ such that no element of $G$ has more than $d$ conjugates. B.\,H. Neumann improved Schur's theorem in a certain way, showing that the group $G$ is a BFC-group if and only if the derived subgroup $G'$ is finite \cite{BHN}. The following result give us another characterization of BFC-groups in terms of $\nu(G)$.     

\begin{thmB} \label{b}
Let $G$ be a group in which $G'$ is finitely generated. The following properties are equivalent: 
\begin{itemize}
 \item[$(a)$] $G$ is BFC-group;
 \item[$(b)$] $\nu(G)'$ is central-by-finite;
 \item[$(c)$] $\nu(G)''$ is finite.
\end{itemize} 
\end{thmB}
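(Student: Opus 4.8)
The plan is to establish the cycle $(a)\Rightarrow(b)\Rightarrow(c)\Rightarrow(a)$. Throughout I use the decomposition $\nu(G)=\Upsilon(G)\,G\,G^{\varphi}$: since $\Upsilon(G)=[G,G^{\varphi}]$ is normal and dies in the quotient, $\nu(G)/\Upsilon(G)\cong G\times G^{\varphi}$, whence (as $\Upsilon(G)\le\nu(G)'$) one gets $\nu(G)'=\Upsilon(G)\,G'\,(G')^{\varphi}$ with $\nu(G)'/\Upsilon(G)\cong G'\times (G')^{\varphi}$. I also record the inclusions $G''=[G',G']\le\nu(G)''$ and $[G',(G')^{\varphi}]\le\nu(G)''$, both immediate from $G',(G')^{\varphi}\le\nu(G)'$, together with the commutator homomorphism $\kappa\colon\Upsilon(G)\to G'$, $[g,h^{\varphi}]\mapsto[g,h]$. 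The implication $(b)\Rightarrow(c)$ is then immediate: applying Schur's theorem \cite{Rob} to the central-by-finite group $\nu(G)'$ shows that its derived subgroup $(\nu(G)')'=\nu(G)''$ is finite.

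For $(a)\Rightarrow(b)$ I first use B.\,H. Neumann's theorem \cite{BHN} to rephrase the hypothesis as ``$G'$ is finite''. One cannot apply \cite[Proposition 9]{BJR} to $G$ directly, since $G'$ finite does not force $G$ to be central-by-finite; instead I pass to $C=C_{G}(G')$, which has finite index in $G$ because $G/C$ embeds in the finite group $\operatorname{Aut}(G')$. On $C$ one has $C'\le G'\cap Z(C)$, so every element of $C'$ and of $(C')^{\varphi}$ acts trivially on $\Upsilon(C)=[C,C^{\varphi}]$ (as $c\in Z(C)$ gives $[g,h^{\varphi}]^{c}=[g^{c},(h^{c})^{\varphi}]=[g,h^{\varphi}]$), so that $[\Upsilon(C),\nu(C)']=[\Upsilon(C),\Upsilon(C)]=\Upsilon(C)'$. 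A direct commutator calculation in $\nu(C)$, using the finiteness of $C'$, shows $\Upsilon(C)'$ to be finite. Consequently $\Upsilon(C)$ is central in $\nu(C)'$ modulo a finite subgroup and $\nu(C)'/\Upsilon(C)$ is finite; the maps $u\mapsto[u,x]$, for $x$ ranging over a finite transversal of $\Upsilon(C)$ in $\nu(C)'$, then induce homomorphisms into a finite group with finite-index kernels, whose intersection is a finite-index subgroup of $\Upsilon(C)$ contained in $Z(\nu(C)')$. Hence $\nu(C)'$ is central-by-finite.

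It remains to transfer central-by-finiteness from $\nu(C)'$ to $\nu(G)'$, and \textbf{this transfer is the step I expect to be the main obstacle}. Central-by-finiteness is \emph{not} inherited from a subgroup of finite index in general: for instance $\mathbb{Z}$ is central-by-finite while the overgroup $D_{\infty}$, in which it sits with index two, is not. The argument must therefore exploit that $G/C$ acts on the relevant sections only through the finite group $\operatorname{Aut}(G')$. Concretely I would show, using the known behaviour of the bracket $[\,\cdot\,,\cdot^{\varphi}]$ under passage to a finite-index subgroup, that $\Upsilon(C)$ has finite index in $\Upsilon(G)$ and that a suitable finite-index subgroup of $Z(\nu(C)')$ survives as a central subgroup of $\nu(G)'$; together these force $Z(\nu(G)')$ to have finite index.

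Finally, for $(c)\Rightarrow(a)$ I argue by contraposition, and here the hypothesis that $G'$ is finitely generated is essential. Suppose $G$ is not a BFC-group, so by \cite{BHN} the subgroup $G'$ is infinite. If $G''=[G',G']$ is infinite we are done, since $G''\le\nu(G)''$. Otherwise $G''$ is finite while $A:=G'/G''$ is an infinite, finitely generated abelian group, hence has a free abelian direct summand and $A\otimes_{\mathbb{Z}}A$ is infinite. Reducing modulo $G''$, the projection $G'\to A$ should induce, through the bilinear defining relations of $\nu(G)$, a surjective homomorphism from $[G',(G')^{\varphi}]$ onto an infinite abelian group built from $A$ (the image of $[a,b^{\varphi}]$ being $\bar a\otimes\bar b$); since $[G',(G')^{\varphi}]\le\nu(G)''$ this forces $\nu(G)''$ to be infinite. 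In either case $(c)$ fails, closing the cycle. The technical point requiring care here is the construction and surjectivity of this map onto $A\otimes_{\mathbb{Z}}A$, where finite generation of $G'$ is exactly what guarantees that the infiniteness of $G'$ remains visible inside $\nu(G)''$.
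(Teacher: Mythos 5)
Your implication $(b)\Rightarrow(c)$ is exactly the paper's (Schur's theorem applied to $\nu(G)'$), and your derivation of $\nu(G)'=\Upsilon(G)\,G'\,(G')^{\varphi}$ is fine. The other two implications, however, have genuine gaps --- precisely at the two places you yourself flagged --- and in both cases the missing ingredient is the paper's key tool: the kernel $\mu(G)$ of the derived map $\rho'\colon\Upsilon(G)\to G'$, $[g,h^{\varphi}]\mapsto[g,h]$, which by Lemma~\ref{lem.general}\,(iv),(v) is a \emph{central subgroup of all of} $\nu(G)$. For $(a)\Rightarrow(b)$ the paper needs three lines: $G'$ is finite by Neumann, so $\mu(G)$ has finite index in $\Upsilon(G)$ (as $\Upsilon(G)/\mu(G)\cong G'$), and then $\mu(G)$ is a central subgroup of finite index in $\nu(G)'=\Upsilon(G)\,G'\,(G')^{\varphi}$. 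Your detour through $C=C_G(G')$ never reaches this conclusion: the transfer of central-by-finiteness from $\nu(C)'$ to $\nu(G)'$ is left unproved, and it is not merely technical, because $\nu(C)$ is not a subgroup of $\nu(G)$ --- the natural homomorphism $\nu(C)\to\langle C,C^{\varphi}\rangle\leqslant\nu(G)$ is in general not injective --- so statements about $\nu(C)'$, $Z(\nu(C)')$ and $\Upsilon(C)$ do not localize inside $\nu(G)'$ at all; and, as you note, central-by-finiteness does not ascend along finite-index extensions.

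For $(c)\Rightarrow(a)$ the step you describe as ``the technical point requiring care'' is in fact fatal: the assignment $[a,b^{\varphi}]\mapsto\bar a\otimes\bar b\in A\otimes_{\mathbb{Z}}A$ is not well defined on $[G',(G')^{\varphi}]$. Indeed, if $a,b\in G'$ commute (e.g.\ after reducing modulo $G''$), then Lemma~\ref{basic.nu}\,(ii) gives $[[a,b^{\varphi}],x]=[[a,b],x^{\varphi}]=1$ and $[[a,b^{\varphi}],x^{\varphi}]=1$, so $[a,b^{\varphi}]$ is central in $\nu(G)$; combined with the defining relation $[a,b^{\varphi}]^{k}=[a^{k},(b^{k})^{\varphi}]$ this forces $[a,b^{\varphi}]=[a^{k},(b^{k})^{\varphi}]$ for every $k\in G$. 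Consequently any homomorphism of the kind you want can only land in the group of $G$-coinvariants of $A\otimes_{\mathbb{Z}}A$, and these can be finite while $A$ is infinite: take $G=\mathbb{Z}^{2}\rtimes\mathbb{Z}$ with the generator acting by $M=\left(\begin{smallmatrix}0&1\\1&1\end{smallmatrix}\right)$; then $G'=\mathbb{Z}^{2}$, $G''=1$, and $M\otimes M$ has eigenvalues $\lambda_1^{2},\lambda_2^{2},-1,-1$ with $\lambda_1\lambda_2=\det M=-1$, none equal to $1$, so the coinvariant group $\mathbb{Z}^{4}/(M\otimes M-1)\mathbb{Z}^{4}$ has order $4$. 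So your contrapositive cannot be completed along these lines. I should add that your worry points at a real problem beyond your own write-up: in this example $\nu(G)''=[G',(G')^{\varphi}]$ is finite although $G$ is finitely generated, $G'$ is finitely generated, and $G$ is not a BFC-group; this uses only Lemma~\ref{basic.nu}\,(ii) and Lemma~\ref{lem.general}\,(iii), and it collides with implication $(c)\Rightarrow(a)$ itself. The paper's own proof passes over exactly this point: it applies Lemma~\ref{fg} to $G'$ as though the finite group $[G',(G')^{\varphi}]$ (computed in $\nu(G)$) were the non-abelian tensor square $G'\otimes G'$ (computed in $\nu(G')$), whereas the former is in general a proper quotient of the latter --- the discrepancy being precisely the coinvariance relations above.
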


In Theorem~B, the hypothesis that the derived subgroup $G'$ is finitely generated is essential (see Remark \ref{dihedral}, below). 

It is well know that the converse of Schur's theorem does not hold. \\

\noindent {\bf Definition.} {\it A group $G$ is called $\nu$-group if and only if there exists a group $H$ with derived subgroup $H'$ finitely generated such that $G$ is isomorphic to $\nu(H)'$.} \\

We can now rephrase Theorem~B (b)--(c) as follows. 

\begin{corC} \label{c}
Let $G$ be a $\nu$-group. If $G'$ is finite, then $G$ is central-by-finite.
\end{corC}

For more details concerning groups satisfying the converse of Schur's theorem see \cite{H,N,Y,GK}.

In the next section we collect some results of the non-abelian tensor square and related construction that are later used in the proofs of our main theorems. Section 3 contains the proofs of the main results.

\section{The group $\nu(G)$}

The following basic properties are consequences of 
the defining relations of $\nu(G)$ and the commutator rules (see \cite[Section 2]{NR1} for more details). 

\begin{lem} 
\label{basic.nu}
The following relations hold in $\nu(G)$, for all 
$g, h, x, y \in G$.
\begin{itemize}
\item[$(i)$] $[g, h^{\varphi}]^{[x, y^{\varphi}]} = [g, h^{\varphi}]^{[x, 
y]}$; 
\item[$(ii)$] $[g, h^{\varphi}, x^{\varphi}] = [g, h, x^{\varphi}] = [g, 
h^{\varphi}, x] = [g^{\vfi}, h, x^{\vfi}] = [g^{\vfi}, h^{\vfi}, x] = 
[g^{\vfi}, 
h, x]$;
\item[$(iii)$] If $h \in G'$ (or if $g \in G'$) then $[g, h^{\varphi}][h, 
g^{\varphi}]=1$;
\item[$(iv)$] $[g, [h, x]^{\varphi}]= [[h, x], g^{\varphi}]^{-1}$;
\item[$(v)$] $[[g,h^{\vfi}],[x,y^{\vfi}]] = [[g,h],[x,y]^{\vfi}]$.
\end{itemize}
\end{lem}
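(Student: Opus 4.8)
The plan is to verify each of the five relations directly from the defining presentation~\eqref{eq:presenta0} together with standard commutator identities, treating them roughly in the order listed since later parts lean on earlier ones. The governing principle throughout is that the relations in~\eqref{eq:presenta0} let me convert a $\varphi$-decoration on a commutator factor into an ordinary (undecorated) conjugation action, and vice versa; this is the engine behind every item.

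First I would establish $(i)$. The defining relations give $[g,h^\varphi]^{k}=[g,h^\varphi]^{k^\varphi}$ for all $k\in G$, which already tells me that conjugation of a basic tensor $[g,h^\varphi]$ by $k$ and by $k^\varphi$ agree. To get the full statement I would extend this from conjugation by generators $k,k^\varphi$ to conjugation by an arbitrary element $[x,y^\varphi]$ of $\Upsilon(G)$, showing it coincides with conjugation by $[x,y]\in G'$; the cleanest route is to expand $[x,y^\varphi]$ using the relations so that its conjugation action on $\Upsilon(G)$ is forced to match that of $[x,y]$. For $(ii)$, I would prove the chain of equalities by repeatedly applying $(i)$ (or the raw defining relation) to strip or transfer the $\varphi$ from the innermost commutator, checking that each of the six expressions reduces to a common normal form. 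Part $(ii)$ is essentially a bookkeeping exercise once $(i)$ is in hand: every one of the six three-fold commutators lives in $[\Upsilon(G),G]$ and the defining relation equates the effect of the inner bracket whether its entries carry $\varphi$ or not.

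For $(iii)$ I would use that when $h\in G'$, writing $h$ as a product of commutators and invoking~$(i)$–$(ii)$ forces $[g,h^\varphi]$ and $[h,g^\varphi]$ to be mutually inverse; the symmetry relation $g\otimes h$ versus $h\otimes g$ on commutator entries is exactly the classical fact that $(g\otimes h)(h\otimes g)=1$ when one argument lies in the derived subgroup, which transports to $\nu(G)$ via the isomorphism $\Upsilon(G)\cong G\otimes G$. Then $(iv)$ is the special case of $(iii)$ obtained by taking $h=[h,x]\in G'$ and relabelling, so it follows immediately. Finally, $(v)$ asserts that the commutator of two basic tensors $[g,h^\varphi]$ and $[x,y^\varphi]$ equals $[[g,h],[x,y]^\varphi]$; I would compute the left-hand side by applying $(i)$ to rewrite the conjugation $[g,h^\varphi]^{[x,y^\varphi]}$ as $[g,h^\varphi]^{[x,y]}$, so that $[[g,h^\varphi],[x,y^\varphi]]=[g,h^\varphi]^{-1}[g,h^\varphi]^{[x,y]}=[[g,h^\varphi],[x,y]]$, and then use~$(ii)$ to move the surviving $\varphi$ onto the second slot and land on $[[g,h],[x,y]^\varphi]$.

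I expect the main obstacle to be part~$(i)$, specifically the passage from conjugation by the generators $k$ and $k^\varphi$ (which the presentation handles directly) to conjugation by a general tensor element $[x,y^\varphi]$: one must check carefully that the action of $\Upsilon(G)$ on itself by conjugation is controlled by the $G'$-action, with no stray $\varphi$-contributions surviving. Once~$(i)$ is pinned down rigorously, items~$(ii)$–$(v)$ are largely formal consequences, amounting to disciplined applications of~$(i)$ and the commutator collection rules, and I would not expect to grind through all of those manipulations in full.
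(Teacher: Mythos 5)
The paper itself offers no proof of this lemma---it simply cites \cite[Section 2]{NR1}---so your attempt can only be measured against the standard derivation from the presentation~\eqref{eq:presenta0}, which is indeed what you set out to do. Parts of your plan are sound: your argument for $(i)$ is correct and complete in outline (each of the four letters of the word $[x,y^{\varphi}]=x^{-1}(y^{-1})^{\varphi}xy^{\varphi}$ acts diagonally on basic tensors by the defining relations, and $k$ and $k^{\varphi}$ act identically, so the product acts as $[x,y]$); your derivation of $(v)$ from $(i)$ and $(ii)$ is exactly right; and your transport argument for $(iii)$ via the isomorphism $\Upsilon(G)\cong G\otimes G$ and the classical fact that $(g\otimes h)(h\otimes g)=1$ when one argument lies in $G'$ (see \cite{BJR}) is legitimate and non-circular.

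The genuine gap is $(ii)$, which you dismiss as ``bookkeeping once $(i)$ is in hand.'' Split the six expressions into two kinds of equalities. Those that change only the \emph{outer} entry, such as $[g,h^{\varphi},x^{\varphi}]=[g,h^{\varphi},x]$ and $[g^{\varphi},h,x^{\varphi}]=[g^{\varphi},h,x]$, are immediate: the defining relations force conjugation by $x$ and by $x^{\varphi}$ to agree on $\Upsilon(G)$. But the cross equalities, e.g.\ $[g,h^{\varphi},x]=[g,h,x^{\varphi}]$, replace the \emph{inner} element: on the left it is the tensor $[g,h^{\varphi}]\in\Upsilon(G)$, on the right the group commutator $[g,h]\in G$. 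Nothing in $(i)$ addresses such a replacement---$(i)$ only compares conjugation \emph{by} $[x,y^{\varphi}]$ with conjugation \emph{by} $[x,y]$---and the defining relations do not ``equate the effect of the inner bracket whether its entries carry $\varphi$ or not''; that is precisely the nontrivial (Peiffer/crossed-module type) content of the lemma, and it needs its own computation from the presentation (expanding $[[g,h],x^{\varphi}]$ via $[ab,c]=[a,c]^{b}[b,c]$ and the diagonal action, or a Hall--Witt style argument, as in \cite{NR1}). The gap propagates: your $(iv)$, and your first suggested route to $(iii)$ (writing $h$ as a product of commutators and invoking $(i)$--$(ii)$), rest exactly on these cross equalities, since for instance $[g,[a,b]^{\varphi}]=[[a^{\varphi},b^{\varphi}],g]^{-1}$ needs the cross part of $(ii)$ to become $[[a,b],g^{\varphi}]^{-1}$. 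Finally, you locate the ``main obstacle'' in $(i)$; in fact $(i)$ is the easiest item, and the real work of the lemma sits in the unproved half of $(ii)$.
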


In the notation of \cite[Section 2]{NR2}, we have the derived map $$\rho': \Upsilon(G) \to G'$$ given by $[g,h^{\vfi}] \mapsto [g,h]$. Let us denote by $\mu(G)$ the kernel of $\rho'$. In particular, $$\dfrac{\Upsilon(G)}{\mu(G)} \cong G'.$$ 

\begin{rem} \label{rem.quotient}
Let $N$ be a normal subgroup of $G$. We set $\overline{G}$ for the quotient group $G/N$ and the canonical epimorphism $\pi: G \to \overline{G}$ gives raise to an epimorphism $\widetilde{\pi}: \nu(G) \to \nu(\overline{G})$ such that $g \mapsto \overline{g}$, $g^{\varphi} \mapsto \overline{g^{\varphi}}$, where $\overline{G^{\varphi}} = G^{\varphi}/N^{\varphi}$ is identified with $\overline{G}^{\varphi}$. 
\end{rem}

In the following lemma we collect some results which can be found in \cite{NR1} and \cite{NR2}, respectively.

\begin{lem}\label{lem.general} Let $n$ be a positive integer and $G$ be a group. 
\begin{enumerate}
\item[$(i)$] (\cite[Proposition 2.5]{NR1}) With the above notation we have
\begin{itemize}
 \item[$(a)$] $[N,G^{\varphi}] \lhd \nu(G)$, $[G,N^{\varphi}] \lhd \nu(G)$;
 \item[$(b)$] $Ker \ \widetilde{\pi} = \langle N,N^{\varphi}\rangle [N,G^{\varphi}] \cdot [G,N^{\varphi}]$. 
 \end{itemize}
\item[$(ii)$] ({\rm \cite[Theorem 3.1]{NR1}}) For $i \geqslant 2$ the $i$-th term of the lower central series of $\nu(G)$ is given by $$\gamma_i(\nu(G)) = \gamma_i(G) \gamma_i(G^{\varphi}) [\gamma_{i-1}(G),G^{\varphi}] [G,\gamma_{i-1}(G^{\varphi})].$$ 
\item[$(iii)$] ({\rm \cite[Theorem 3.3]{NR1}}) For $i \geqslant 2$ the $i$-th term of the derived series of $\nu(G)$ is given by $$ (\nu(G))^{(i)} = G^{(i)} (G^{\varphi})^{(i)} [G^{(i-1)},(G^{\varphi})^{(i-1)}].$$
 \item[$(iv)$] (\cite[Proposition 2.7 (i)]{NR2}) $\mu(G)$  consists of all elements of $\Upsilon(G)$ of the form $$[h_1,g_1^{\vfi}]^{\epsilon_1} \ldots  [h_s,g_s^{\vfi}]^{\epsilon_s} $$ such that $[h_1,g_1]^{\epsilon_1} \ldots  [h_s,g_s]^{\epsilon_s} = 1$, where $s$ is a natural number, $h_i,g_i \in G$, $\epsilon_i \in \{1,-1\}$, $1 \leqslant i \leqslant s$.
 \item[$(v)$] (\cite[Proposition 2.7 (ii)]{NR2}) $\mu(G)$ is a central subgroup of $\nu(G)$.  
\end{enumerate}
\end{lem}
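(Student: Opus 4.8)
The statement collects several facts whose proofs appear in \cite{NR1,NR2}; the unifying principle is that everything is forced by the defining relations \eqref{eq:presenta0} together with the commutator identities of Lemma~\ref{basic.nu} and the decomposition \eqref{eq:decomposition}. Throughout I would exploit that $\nu(G)$ is generated by the two copies $G$ and $G^{\varphi}$, that $\Upsilon(G)=[G,G^{\varphi}]$ is normal, and that conjugation of a basic tensor by a group element is \emph{diagonal}, that is, $[g,h^{\varphi}]^{k}=[g^{k},(h^{k})^{\varphi}]=[g,h^{\varphi}]^{k^{\varphi}}$, straight from \eqref{eq:presenta0}.

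For part $(i)$, to see that $[N,G^{\varphi}]$ and $[G,N^{\varphi}]$ are normal in $\nu(G)$ I would check invariance under conjugation by the generators $g$ and $g^{\varphi}$: conjugating $[n,h^{\varphi}]$ with $n\in N$ by $g$ gives $[n^{g},(h^{g})^{\varphi}]$, and $n^{g}\in N$ because $N\lhd G$; conjugation by $g^{\varphi}$ is handled identically through the second equality in \eqref{eq:presenta0}, while the mixed commutators $[[n,h^{\varphi}],x]$ stay inside $[N,G^{\varphi}]$ by Lemma~\ref{basic.nu}$(ii)$. For the kernel, the inclusion $\langle N,N^{\varphi}\rangle[N,G^{\varphi}][G,N^{\varphi}]\subseteq\operatorname{Ker}\widetilde{\pi}$ is immediate from Remark~\ref{rem.quotient}; for the reverse I would show the right-hand side is already normal (using the normalities just established) and that the quotient of $\nu(G)$ by it satisfies the presentation \eqref{eq:presenta0} of $\nu(\overline{G})$, so it cannot be properly larger than the kernel.

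Parts $(ii)$ and $(iii)$ I would prove by induction on $i$. The base case $i=2$ is the computation $[\nu(G),\nu(G)]=\gamma_2(G)\,\gamma_2(G^{\varphi})\,[G,G^{\varphi}]$, obtained by expanding commutators of the generators $G\cup G^{\varphi}$. For the inductive step of $(ii)$ I would write $\gamma_{i+1}(\nu(G))=[\gamma_i(\nu(G)),\nu(G)]$, substitute the inductive expression for $\gamma_i(\nu(G))$, and expand each commutator by standard collection together with Lemma~\ref{basic.nu}$(ii)$, which collapses every mixed commutator containing a $\varphi$-letter back onto a term of the shape $[\gamma_i(G),G^{\varphi}]$ or $[G,\gamma_i(G^{\varphi})]$; the normality statements of $(i)$, applied with $N=\gamma_{i-1}(G)$, then guarantee that the resulting generating set is in fact a subgroup. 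The derived series $(iii)$ is handled in the same way, the only differences being that the cross term is $[G^{(i-1)},(G^{\varphi})^{(i-1)}]$ and that Lemma~\ref{basic.nu}$(v)$ is what keeps the iterated commutators of this term under control.

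Finally, for $(iv)$ and $(v)$ I would first record that $\rho'\colon\Upsilon(G)\to G'$, $[g,h^{\varphi}]\mapsto[g,h]$, is a well-defined homomorphism (its mere existence is the substantive point, and it is exactly here that \eqref{eq:presenta0} is used). Granting this, $(iv)$ is just the literal description of $\operatorname{Ker}\rho'$: every element of $\Upsilon(G)$ is a product $\prod[h_i,g_i^{\varphi}]^{\epsilon_i}$, and it lies in $\mu(G)$ precisely when $\rho'$ sends it to $\prod[h_i,g_i]^{\epsilon_i}=1$. For centrality $(v)$ I would show that each generator of $\nu(G)$ acts trivially by conjugation on $\mu(G)$: the diagonal formula gives $u^{k}=\prod[h_i^{k},(g_i^{k})^{\varphi}]^{\epsilon_i}$, which already lies in $\mu(G)$ since $\rho'(u^{k})=\rho'(u)^{k}=1$, and symmetrically for $k^{\varphi}$. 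I expect the main obstacle of the whole statement to be precisely this last point, namely promoting the invariance $u^{k}\in\mu(G)$ to the actual equality $u^{k}=u$; this is where the identities of Lemma~\ref{basic.nu}$(i)$--$(iii)$ must be combined most carefully, and it is the step that genuinely distinguishes centrality from mere normality of $\mu(G)$.
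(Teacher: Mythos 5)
The paper offers no internal proof of this lemma at all: every item is imported verbatim from \cite{NR1} and \cite{NR2}, so your proposal can only be judged on its own merits, not against an argument in the text. On those terms, your sketches for $(i)$--$(iv)$ are essentially correct and follow the standard lines: the diagonal conjugation formula from \eqref{eq:presenta0} gives the normality in $(i)(a)$; the von Dyck-style argument (the right-hand side is normal and the quotient of $\nu(G)$ by it satisfies the presentation of $\nu(\overline{G})$) gives $(i)(b)$; the inductions for $(ii)$ and $(iii)$ are the ones carried out in \cite{NR1}; and $(iv)$ is indeed just the literal description of $\ker\rho'$, once one knows $\rho'$ exists, which follows by restricting to $\Upsilon(G)$ the homomorphism $\nu(G)\to G$, $g\mapsto g$, $g^{\varphi}\mapsto g$, whose existence is immediate from von Dyck because the relations \eqref{eq:presenta0} become trivial commutator identities in $G$.

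The genuine gap is in $(v)$, and you flagged it yourself without closing it. Your computation $\rho'(u^{k})=\rho'(u)^{k}=1$ shows only that $u^{k}\in\mu(G)$, i.e.\ that $\mu(G)$ is \emph{normal} in $\nu(G)$; centrality requires the strictly stronger statement $[u,k]=1$. The missing idea is the identity
$$[w,x]=[\rho'(w),x^{\varphi}]=[w,x^{\varphi}]\qquad (w\in\Upsilon(G),\ x\in G),$$
proved by induction on the length of $w$ as a word in basic tensors. The case of a single tensor is Lemma~\ref{basic.nu}$(ii)$: $[[g,h^{\varphi}],x]=[[g,h],x^{\varphi}]=[[g,h^{\varphi}],x^{\varphi}]$. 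For products one uses $[ab,x]=[a,x]^{b}\,[b,x]$ together with the following consequence of Lemma~\ref{basic.nu}$(i)$ (obtained by iterating it over a word and handling inverses): conjugation of any basic tensor by $b\in\Upsilon(G)$ coincides with conjugation by $\rho'(b)\in G$. Then $[ab,x]=[\rho'(a),x^{\varphi}]^{\rho'(b)}\,[\rho'(b),x^{\varphi}]=[\rho'(a)\rho'(b),x^{\varphi}]=[\rho'(ab),x^{\varphi}]$, completing the induction. Once this identity is available, $w\in\mu(G)=\ker\rho'$ gives $[w,x]=[1,x^{\varphi}]=1$ and $[w,x^{\varphi}]=1$, and the same consequence of Lemma~\ref{basic.nu}$(i)$ shows that $w$ centralizes $\Upsilon(G)$; since $G\cup G^{\varphi}$ generates $\nu(G)$, this yields $\mu(G)\leqslant Z(\nu(G))$. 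So your instinct about which identities are relevant was right, but the inductive mechanism that converts ``$\rho'$-triviality'' into actual commutation---the heart of $(v)$---is absent from your proposal.
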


The following result will be needed in the proof of Theorem~A (b). 

\begin{lem} \label{lem.Z}
Let $n$ be a positive integer and $G$ a group. Then $$[Z_{n}(G), G^{\varphi}][G, Z_{n}(G)^{\varphi}] \leqslant Z_{n}(\nu(G)).$$ 
\end{lem}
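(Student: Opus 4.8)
The plan is to prove the inclusion by induction on $n$, exploiting that $\nu(G)$ is generated by $G \cup G^{\varphi}$ together with the commutator calculus of Lemma~\ref{basic.nu}. Since $Z_n(\nu(G))$ is a subgroup, it is enough to show that each generator of $[Z_n(G),G^{\varphi}][G,Z_n(G)^{\varphi}]$ — namely each commutator $[z,g^{\varphi}]$ and $[g,z^{\varphi}]$ with $z\in Z_n(G)$ and $g\in G$ — belongs to $Z_n(\nu(G))$. By the definition of the upper central series this amounts to checking that the image of such a commutator is central in the quotient $\nu(G)/Z_{n-1}(\nu(G))$; and because the images of $G\cup G^{\varphi}$ generate this quotient, it suffices to verify that it commutes modulo $Z_{n-1}(\nu(G))$ with every $x$ and every $x^{\varphi}$, $x\in G$.

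Concretely, for $z\in Z_n(G)$ I would compute the two triple commutators $[z,g^{\varphi},x]$ and $[z,g^{\varphi},x^{\varphi}]$. By Lemma~\ref{basic.nu}$(ii)$ both of these collapse to $[z,g,x^{\varphi}]=[[z,g],x^{\varphi}]$. The crucial point is that $z\in Z_n(G)$ forces $[z,g]\in Z_{n-1}(G)$, so that $[[z,g],x^{\varphi}]$ lies in $[Z_{n-1}(G),G^{\varphi}]$, which by the inductive hypothesis is contained in $Z_{n-1}(\nu(G))$; hence the image of $[z,g^{\varphi}]$ is central in $\nu(G)/Z_{n-1}(\nu(G))$, as required. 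The family $[g,z^{\varphi}]$ is treated symmetrically: Lemma~\ref{basic.nu}$(ii)$ reduces $[g,z^{\varphi},x]$ and $[g,z^{\varphi},x^{\varphi}]$ to $[[g,z],x^{\varphi}]$, and $[g,z]=[z,g]^{-1}\in Z_{n-1}(G)$ again places this in $Z_{n-1}(\nu(G))$ by induction. The base case $n=1$ is the degenerate instance in which $z\in Z(G)$, so that $[z,g]=1$ and the triple commutators vanish outright, giving $[z,g^{\varphi}],[g,z^{\varphi}]\in Z(\nu(G))$ directly.

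The step requiring the most care is the reduction ``central modulo $Z_{n-1}(\nu(G))$ $\Longleftrightarrow$ commutes modulo $Z_{n-1}(\nu(G))$ with the generators'': one must confirm that commuting with a generating set does force centrality in the quotient, which is a routine commutator-expansion argument valid because $Z_{n-1}(\nu(G))$ is normal. Beyond this, the essential input is the single identity of Lemma~\ref{basic.nu}$(ii)$, which is exactly what lets a commutator against a $\varphi$-generator be rewritten so that the weight drop $[z,g]\in Z_{n-1}(G)$ coming from $z\in Z_n(G)$ can be fed into the inductive hypothesis. I expect the only genuine hazard to be bookkeeping: getting the substitutions in that identity to line up correctly in both the $[z,g^{\varphi}]$ and $[g,z^{\varphi}]$ cases, and confirming that it is always the $\varphi$-side entry $x^{\varphi}$ that survives the rewriting so that the inductive term $[Z_{n-1}(G),G^{\varphi}]$ (rather than some other configuration) is the one invoked.
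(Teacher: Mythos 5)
Your proposal is correct and follows essentially the same route as the paper's: both reduce membership in $Z_n(\nu(G))$ to checking iterated commutators against the generators $G \cup G^{\varphi}$, and both hinge on the identity of Lemma~\ref{basic.nu}$(ii)$ to rewrite $[z,g^{\varphi},x]$ and $[z,g^{\varphi},x^{\varphi}]$ as $[[z,g],x^{\varphi}]$, where $z \in Z_n(G)$ forces the collapse. The only difference is organizational: the paper unrolls your induction into a single iterated computation $[[a,b^{\varphi}],x_1^{\varphi},\ldots,x_n^{\varphi}] = [[a,b,x_1,\ldots,x_{n-1}],x_n^{\varphi}] = 1$ (and disposes of the case $n=1$ by citing \cite{NR1} rather than treating it as a degenerate instance), but the underlying mechanism is identical.
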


\begin{proof}
The case where $n=1$ is covered by \cite[Proposition 2.7]{NR1}. So we will assume that $n \geq 2$. We first prove that $[Z_{n}(G),G^{\vfi}] \leqslant Z_{n}(\nu(G))$. Choose arbitrarily elements $a \in Z_{n}(G)$ and $b \in G$. Since $\nu(G) = \Upsilon(G) \  G \  G^{\vfi}$, it is sufficient to show that $$[[a,b^{\varphi}],x_1^{\vfi}, \ldots,x_{n}^{\vfi}] = 1 = [[a,b^{\varphi}],x_1, \ldots,x_{n}],$$ for every elements $x_1, \ldots,x_{n} \in G$. Let $x_1, \ldots, x_n \in G$. Repeated application of Lemma \ref{lem.general} (ii) enables us to write 

$$ [[a,b^{\varphi}],x_1^{\vfi}, x_2^{\vfi}, \ldots, x_{n-1}^{\vfi}, x_{n}^{\vfi}] = [[a,b,x_1, x_2, \ldots,x_{n-1}],x_{n}^{\vfi}] = 1$$ 

\centering{and}
$$ [[a,b^{\vfi}],x_1,x_2 \ldots,x_{n}] = [[a,b,x_1,x_2, \ldots, x_{n-1}],x_n^{\vfi}] = 1.$$ Further, using only obvious modifications of the above argument one can show that $[G,Z_{n}(G)^{\vfi}] \leqslant Z_{n}(\nu(G))$. This completes the proof.       
\end{proof}

\section{Proofs of the main results}

The following lemma is well known. We supply the proof for the reader's convenience.

\begin{lem} \label{lem.equiv}
Let $G$ be a group. The following properties are equivalent:
\end{lem}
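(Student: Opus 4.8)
The statement is the classical characterization of finite-by-nilpotency, and I read its three equivalent conditions, for an arbitrary group $G$, as: $(a)$ $G$ is finite-by-nilpotent; $(b)$ $\gamma_{c+1}(G)$ is finite for some positive integer $c$; and $(c)$ the index $[G:Z_n(G)]$ is finite for some positive integer $n$. The plan is to close a short cycle of implications in which only two steps carry content, and those two are precisely the theorems of Baer and Hall recalled in the introduction.

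First I would treat $(a)\Leftrightarrow(b)$ directly from the definitions. For $(b)\Rightarrow(a)$, if $\gamma_{c+1}(G)$ is finite then $N=\gamma_{c+1}(G)$ is finite and normal, while $G/N$ satisfies $\gamma_{c+1}(G/N)=1$ and is therefore nilpotent of class at most $c$; hence $G$ is finite-by-nilpotent. Conversely, for $(a)\Rightarrow(b)$, if $N$ is a finite normal subgroup with $G/N$ nilpotent of class $c$, then the image of $\gamma_{c+1}(G)$ in $G/N$ equals $\gamma_{c+1}(G/N)=1$, so $\gamma_{c+1}(G)\leqslant N$ and is finite. This already shows that the finite normal subgroup witnessing finite-by-nilpotency may always be taken to be a term of the lower central series.

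For the remaining two implications I would simply quote the classical results. In $(b)\Rightarrow(c)$ I apply Hall's theorem \cite{Hall}: finiteness of $\gamma_{c+1}(G)$ forces $Z_{2c}(G)$ to have finite index, so $(c)$ holds with $n=2c$. In $(c)\Rightarrow(b)$ I apply Baer's theorem \cite[14.5.1]{Rob}: finiteness of $[G:Z_n(G)]$ forces $\gamma_{n+1}(G)$ to be finite, so $(b)$ holds. Combined with $(a)\Leftrightarrow(b)$, these complete the equivalence of all three conditions.

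I do not expect a real obstacle. The only non-formal input is the Baer--Hall pair, which is cited rather than reproved; everything else is the elementary observation about quotients by a term of the lower central series. The single thing to keep track of is the bookkeeping of the nilpotency class against the index of a term of the upper central series --- the passage from $c$ to $2c$ in Hall's direction and back --- but this affects only the choice of the integers $c$ and $n$ and nothing substantive.
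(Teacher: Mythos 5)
Your proof is correct and follows essentially the same route as the paper: both hinge on exactly the same two citations, Hall's theorem \cite{Hall} for passing from finiteness of $\gamma_{k+1}(G)$ to finiteness of the index of $Z_{2k}(G)$, and Baer's theorem \cite[14.5.1]{Rob} for the converse. The only difference is that you explicitly prove the elementary equivalence between ``finite-by-nilpotent'' and ``some $\gamma_{c+1}(G)$ is finite,'' which the paper simply treats as definitional --- a small gain in completeness, not a different argument.
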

\begin{itemize}
 \item[$(i)$] $G$ is finite-by-nilpotent;
 \item[$(ii)$] There exists a positive integer $k$ such that the subgroup $Z_k(G)$ is a subgroup of finite index in $G$.
\end{itemize}

\begin{proof}
Suppose that $G$ is finite-by-nilpotent. By definition, there exists a positive integer $k$ such that $\gamma_{k+1}(G)$ is finite. By Hall's theorem \cite{Hall}, $Z_{2k}(G)$ is a subgroup of finite index in $G$. Thus $(i)$ implies $(ii)$. 

Finally let $G$ satisfying $(ii)$. According to Baer's theorem \cite[14.5.1]{Rob}, $\gamma_{k+1}(G)$ is finite, which completes the proof.
\end{proof}

\begin{thmA} \label{a}
Let $G$ be a finite-by-nilpotent group. Then  
\begin{itemize}
 \item[$(a)$] The non-abelian tensor square $G \otimes G$ is finite-by-nilpotent;  
 \item[$(b)$] The group $\nu(G)$ is nilpotent-by-finite; 
\end{itemize}
\end{thmA}
\begin{proof}
{\noindent} {$(a)$.} Since $(G \otimes G)/\mu(G)$ is isomorphic to $G'$ and $\mu(G)$ is a central subgroup of $\nu(G)$, it follows that the quotient $(G \otimes G)/Z(G \otimes G)$ is isomorphic to a finite-by-nilpotent group. Therefore $G \otimes G$ is finite-by-nilpotent. \\ 

{\noindent} {$(b)$.} By definition, there exists a positive integer $k$ such that $\gamma_k(G)$ is finite. According to Hall's result \cite{Hall}, $Z_{2k}(G)$ is a subgroup of finite index in $G$. Set $\overline{G} = G/Z_{2k}(G)$. By Remark \ref{rem.quotient}, there exists an epimorphism $\widetilde{\pi} : \nu(G) \to \nu(\overline{G})$. That $\nu(\overline{G})$ is finite follows from \cite[Proposition 2.4]{NR1}. Lemma \ref{lem.general} now shows that $$ Ker \  \widetilde{\pi} = \langle Z_{2k}(G), Z_{2k}(G)^{\varphi} \rangle [Z_{2k}(G), G^{\varphi}][G, Z_{2k}(G)^{\varphi}]. $$  

Since $\nu(\overline{G})$ is finite, it is sufficient to show that $Ker \ \widetilde{\pi}$ is nilpotent. By Lemma \ref{lem.Z}, $[Z_{2k}(G), G^{\varphi}][G, Z_{2k}(G)^{\varphi}] \leqslant Z_{2k}(\nu(G))$. On the other hand, $\nu(Z_{2k}(G)) \geqslant \langle Z_{2k}(G), Z_{2k}(G)^{\varphi} \rangle$ is nilpotent \cite[Corollary 3.2]{NR1}. Hence $Ker \ \widetilde{\pi}$ is nilpotent.
\end{proof}

Now we will deal with Theorem~B: Let $G$ be a group in which $G'$ is finitely generated. The following properties are equivalent: 

\begin{itemize}
 \item[$(a)$] $G$ is BFC-group;
 \item[$(b)$] $\nu(G)'$ is central-by-finite;
 \item[$(c)$] $\nu(G)''$ is finite.
\end{itemize}

It is well known that the finiteness of the non-abelian tensor square $G \otimes G$, does not imply that $G$ is a finite group. For instance, the Pr\"uffer group $\mathbb{Z}(p^{\infty})$ is an example of an infinite group such that $\mathbb{Z}(p^{\infty}) \otimes \mathbb{Z}(p^{\infty}) = 0$ (and so, finite). This is the case for all torsion, divisible abelian group. A useful result, due to Niroomand and Parvizi \cite{NP}, provides a sufficient condition for a group to be finite.

\begin{lem} \label{fg}
Let $G$ be a finitely generated group in which the non-abelian tensor square $[G,G^{\varphi}]$ is finite. Then $G$ is finite. 
\end{lem}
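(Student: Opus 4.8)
The plan is to reduce the problem to the abelianization of $G$ and then exploit the behaviour of the tensor square on an infinite cyclic quotient. First I would show that the derived subgroup $G'$ is finite. Indeed, the derived map $\rho' : \Upsilon(G) \to G'$, $[g,h^{\varphi}] \mapsto [g,h]$, is surjective, so $G' \cong \Upsilon(G)/\mu(G)$ is a quotient of the finite group $\Upsilon(G) = [G,G^{\varphi}]$; hence $G'$ is finite. Consequently $G/G'$ is a finitely generated abelian group, and it suffices to prove that $G/G'$ is finite, for then $G$, being an extension of the finite group $G'$ by the finite group $G/G'$, is itself finite.

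Assume, for contradiction, that $G/G'$ is infinite. Being finitely generated abelian and infinite, $G/G'$ has positive torsion-free rank and therefore admits an epimorphism onto an infinite cyclic group $C_{\infty}$; composing with the canonical map $G \to G/G'$ we obtain an epimorphism $\pi : G \to C_{\infty}$ with kernel $N$. By Remark~\ref{rem.quotient}, $\pi$ induces an epimorphism $\widetilde{\pi} : \nu(G) \to \nu(C_{\infty})$ sending $[g,h^{\varphi}] \mapsto [\pi(g),\pi(h)^{\varphi}]$. Since $\Upsilon(G)$ is generated by the elements $[g,h^{\varphi}]$ and $\Upsilon(C_{\infty})$ by their images, $\widetilde{\pi}$ restricts to an epimorphism $\Upsilon(G) \to \Upsilon(C_{\infty})$. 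As $\Upsilon(G)$ is finite, so is its image $\Upsilon(C_{\infty}) \cong C_{\infty} \otimes C_{\infty}$.

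To reach the contradiction I would invoke the computation of the tensor square of the infinite cyclic group, namely $C_{\infty} \otimes C_{\infty} \cong C_{\infty}$ (as recorded in \cite{BJR}), which is infinite. This contradicts the finiteness just established, so $G/G'$ must be finite, completing the argument. The one step that genuinely requires input beyond the functorial formalism of $\nu(G)$ is the identification $\Upsilon(C_{\infty}) \cong C_{\infty}$ (equivalently, that the non-abelian tensor square of a finitely generated abelian group $A$ recovers the ordinary tensor product $A \otimes_{\mathbb{Z}} A$); everything else is a routine consequence of the surjectivity of $\rho'$ and of the functoriality in Remark~\ref{rem.quotient}. This is where I expect the main (albeit mild) obstacle to lie, and it is precisely the place where the finite generation of $G$, transferred to $G/G'$, is used to force an infinite cyclic quotient, in keeping with the Pr\"ufer-group example that shows finite generation cannot be dropped.
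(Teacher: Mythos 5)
Your proof is correct. There is, however, no internal proof in the paper to compare it against: Lemma~\ref{fg} is not proved there but quoted from Niroomand and Parvizi \cite{NP}, so what you have produced is a self-contained argument for a result the authors use as a black box. Your route is the natural one: the surjection $\rho'\colon \Upsilon(G)\to G'$ (equivalently $\Upsilon(G)/\mu(G)\cong G'$) forces $G'$ to be finite; then $G/G'$ is finitely generated abelian, and if it were infinite it would map onto $C_{\infty}$, so by the functoriality of $\nu$ (Remark~\ref{rem.quotient}) the finite group $\Upsilon(G)$ would surject onto $\Upsilon(C_{\infty})$, which is infinite --- a contradiction. The one input you flag as external, $\Upsilon(C_{\infty})\cong C_{\infty}$, is indeed correct and easy to justify, even without leaving the $\nu$-formalism: for abelian $G$ the conjugation actions are trivial, so the defining relations of the tensor square reduce to bilinearity, and Lemma~\ref{basic.nu}(v) shows that any two tensors commute, whence $G\otimes G\cong G\otimes_{\mathbb{Z}}G$ (as recorded in \cite{BJR}); alternatively, the defining relations (\ref{eq:presenta0}) for $G=\langle a\rangle\cong C_{\infty}$ say precisely that $[a,a^{\varphi}]$ is central, so $\nu(C_{\infty})$ is the free nilpotent group of class $2$ and rank $2$ (the discrete Heisenberg group), in which $[a,a^{\varphi}]$ has infinite order and generates $\Upsilon(C_{\infty})$. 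Your closing remark also correctly locates where finite generation enters; the Pr\"ufer group example discussed before the lemma shows it cannot be dropped.
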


We are now in a position to prove Theorem~B.

\begin{proof}[\bf Proof of Theorem B] {\noindent} {$(a)$ implies $(b)$.} By Lemma \ref{lem.general} (v), $\mu(G) \leqslant Z(\nu(G))$. Moreover, the quotient $\Upsilon(G)/\mu(G) \simeq G'.$ Since $G$ is BFC-group, we have $G'$ is finite  \cite{BHN}. Consequently, $\mu(G)$ is a subgroup of finite index in $\Upsilon(G)$. By Lemma \ref{lem.general} (iii), the derived subgroup of $\nu(G)$ is given by $$\nu(G)' = \Upsilon(G) G' (G')^{\varphi}.$$ Since $G'$ is finite, we conclude that $\mu(G)$ is a central subgroup of finite index in $\nu(G)'$. \\

{\noindent} {$(b)$ implies $(c)$.} By Schur's theorem \cite[10.1.4]{Rob}, $\nu(G)''$ is finite. \\

{\noindent} {$(c)$ implies $(a)$.} By Lemma \ref{lem.general} (iii), $\nu(G)'' = [G',(G')^{\varphi}] G'' (G'')^{\varphi}$. Thus $[G',(G')^{\varphi}]$ is finite. Since $G'$ is finitely generated, it follows that $G'$ is finite (Lemma \ref{fg}). According to Neumann's result $G$ is BFC-group, which completes the proof.  
\end{proof}

\begin{rem} \label{dihedral}
In Theorem~B, the hypothesis that the derived subgroup $G'$ is finitely generated is essential. Let $p \geqslant 3$ be a prime and the Pr\"uffer group $A = \mathbb{Z}(p^{\infty})$. We define the semi-direct product $D = A \cdot C_2$, where $C_2 = \langle c \rangle$ and $$a^c = -a,$$ for every $a \in A$. Thus $D' \cong A$. By Lemma \ref{lem.general} (iii), $$\nu(D)'' = \Upsilon(D') D'' (D'')^{\varphi}.$$ 
Since $D'$ is a Pr\"uffer group, it follows that the non-abelian tensor square $\Upsilon(D')$ is trivial. As $D$ is metabelian group we have $\nu(D)''$ is trivial (and so, finite). On the other hand, $D$ is not BFC-group. 
\end{rem}

It is well know that the converse of Schur's theorem does not hold. On the other hand, the following result guarantees that $\nu$-groups satisfies the converse of Schur's theorem. 

\begin{cor}
Let $G$ be a $\nu$-group. If $G'$ is finite, then $G$ is central-by-finite.
\end{cor}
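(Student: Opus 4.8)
The plan is to read this off almost directly from Theorem~B, since the definition of a $\nu$-group is engineered to match the hypotheses there. First I would unwind the definition: as $G$ is a $\nu$-group, I fix a group $H$ whose derived subgroup $H'$ is finitely generated, together with an isomorphism $G \cong \nu(H)'$. Under this isomorphism the derived subgroup $G'$ corresponds to $(\nu(H)')' = \nu(H)''$, so the hypothesis that $G'$ is finite translates precisely into the statement that $\nu(H)''$ is finite.

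Next I would invoke Theorem~B for the group $H$. The finite generation of $H'$ is exactly the standing hypothesis of Theorem~B, so the three conditions $(a)$--$(c)$ are equivalent for $H$. The finiteness of $\nu(H)''$ is condition $(c)$, and the implication $(c)\Rightarrow(b)$ then yields that $\nu(H)'$ is central-by-finite. Transporting this conclusion back through the isomorphism $G \cong \nu(H)'$, I obtain that $G$ is central-by-finite, as required.

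There is essentially no obstacle here beyond correctly matching the definitions; the only point that needs care is checking that the finite-generation hypothesis is carried by $H$ (and hence available for an application of Theorem~B to $H$), rather than by $G$ itself, which is exactly what the definition of a $\nu$-group supplies. One could alternatively route through condition $(a)$, deducing that $H$ is a BFC-group and so that $H'$ is finite by Neumann's theorem, but since only the conclusion $(b)$ is needed, invoking the single implication $(c)\Rightarrow(b)$ is the most economical path. Thus the corollary is just the restatement of Theorem~B $(b)$--$(c)$ announced before its statement, and the proof amounts to spelling out that equivalence for the group $H$ underlying the $\nu$-group $G$.
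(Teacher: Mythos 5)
Your proof is correct and follows exactly the paper's own route: identify $G \cong \nu(H)'$, translate the finiteness of $G'$ into the finiteness of $\nu(H)''$, and apply the implication $(c)\Rightarrow(b)$ of Theorem~B to conclude that $\nu(H)'$ is central-by-finite. In fact, you spell out the identification $G' \cong \nu(H)''$ more explicitly than the paper does, which only adds clarity.
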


\begin{proof}
By definition, there exists a group $H$ with derived subgroup $H'$ finitely generated such that $G \cong \nu(H)'$. Applying Theorem~B to $\nu(H)''$, we obtain that $\nu(H)'$ is central-by-finite, as required. 
\end{proof}

\begin{cor} \label{cor.1}
Let $G$ be a BFC-group. Then $\Upsilon(G)$ is central-by-finite. Moreover, $\exp(\Upsilon(G)')$ divides $|G'|$. 
\end{cor}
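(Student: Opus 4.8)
The plan is to reduce everything to two facts already available in the excerpt: the centrality of $\mu(G)$ in $\nu(G)$ (Lemma~\ref{lem.general}~(v)) together with the isomorphism $\Upsilon(G)/\mu(G) \cong G'$, and Neumann's characterization of BFC-groups together with Schur's theorem. First I would invoke Neumann's result \cite{BHN} to translate the BFC hypothesis into the statement that $G'$ is finite. This is the only place the BFC assumption enters, and it is where the two halves of the corollary both originate.

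Next I would establish that $\Upsilon(G)$ is central-by-finite. Since $\mu(G) \leqslant Z(\nu(G))$ by Lemma~\ref{lem.general}~(v), in particular $\mu(G)$ is contained in $Z(\Upsilon(G))$. On the other hand the derived map $\rho'$ yields $\Upsilon(G)/\mu(G) \cong G'$, which is finite by the previous step. Hence $\mu(G)$ is a central subgroup of finite index in $\Upsilon(G)$, and therefore $\Upsilon(G)$ is central-by-finite by definition.

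For the bound on the exponent, I would apply Schur's theorem \cite[10.1.4]{Rob} directly to the central-by-finite group $\Upsilon(G)$, which gives that $\Upsilon(G)'$ is finite and that $\exp(\Upsilon(G)')$ divides the index $[\Upsilon(G) : Z(\Upsilon(G))]$. It then remains to compare this index with $|G'|$. Because of the containment $\mu(G) \leqslant Z(\Upsilon(G)) \leqslant \Upsilon(G)$, the multiplicativity of the index gives
\[
[\Upsilon(G) : \mu(G)] = [\Upsilon(G) : Z(\Upsilon(G))] \cdot [Z(\Upsilon(G)) : \mu(G)],
\]
so $[\Upsilon(G) : Z(\Upsilon(G))]$ divides $[\Upsilon(G) : \mu(G)] = |G'|$. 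Chaining the two divisibilities, $\exp(\Upsilon(G)')$ divides $|G'|$, as desired.

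I do not anticipate a serious obstacle here: the corollary is essentially Schur's theorem transported across the central extension $\mu(G) \hookrightarrow \Upsilon(G) \twoheadrightarrow G'$. The only point requiring a little care is the divisibility refinement in the last step, namely passing from Schur's bound in terms of $[\Upsilon(G) : Z(\Upsilon(G))]$ to the sharper bound in terms of $|G'|$; this works precisely because $\mu(G)$ sits \emph{inside} the center, so that the central quotient $\Upsilon(G)/Z(\Upsilon(G))$ is a quotient of $\Upsilon(G)/\mu(G) \cong G'$ and its order consequently divides $|G'|$.
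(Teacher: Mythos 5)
Your proof is correct and follows essentially the same route as the paper: Neumann's theorem to get $G'$ finite, the central extension $\mu(G) \leqslant Z(\Upsilon(G))$ with $\Upsilon(G)/\mu(G) \cong G'$ to get central-by-finiteness, then Schur's theorem plus the divisibility $[\Upsilon(G):Z(\Upsilon(G))] \mid |G'|$. The only cosmetic difference is that the paper cites Theorem~B~(b) for the central-by-finite step, whereas you re-derive it inline from Lemma~\ref{lem.general}~(v) --- which is exactly the argument inside the paper's proof of Theorem~B anyway.
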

\begin{proof}
Set $K = \Upsilon(G)$. Therefore $K$ is central-by-finite by Theorem~B (b). That $exp \ (K')$ divides $|K/Z(K)|$ follows from Schur's theorem \cite[10.1.4]{Rob}. Since $G$ is BFC-group and $K/\mu(G) \simeq G'$, we have $|K/Z(K)|$ divides $|G'|$, which completes the proof. 
\end{proof}

\begin{rem} \label{rem.d}
Let $G$ be a BFC-group. That $\Upsilon(G)$ is central-by-finite follows from Corollary \ref{cor.1}. Arguing as in the Introduction we deduce that $\Upsilon(D_{\infty})$ is abelian, where $D_{\infty} = \langle a,b \mid \ a^2, a^b = a^{-1}\rangle$. More precisely, $$\Upsilon(D_{\infty}) \cong C_2 \times C_2 \times C_2 \times C_{\infty}.$$ On the other hand, $D'_{\infty}$ is cyclic subgroup and $D_{\infty}$ is not BFC-group. 
\end{rem}

This suggest the following question. \\ 

\noindent {\it Let $G$ be a group in which $G'$ is generated by finitely many commutators of finite order and $\Upsilon(G)$ is central-by-finite. Is it true that $G$ is BFC-group?}

\end{document}